\DeclareMathOperator*{\Ker}{Ker}
\DeclareMathOperator*{\dist}{dist}
\DeclareMathOperator*{\R}{Re}
\DeclareMathOperator*{\I}{Im}
\newcommand{\dd}{\mathrm{d}}
\newcommand{\RR}{\mathbb{R}}
\newcommand{\CC}{\mathbb{C}}
\newcommand{\NN}{\mathbb{N}}
\newcommand{\T}{(T(t))_{t\ge 0}}
\newtheorem{thm}{Theorem}[section]
\theoremstyle{definition}
\newtheorem{rem}[thm]{Remark}
\numberwithin{equation}{section}
\begin{document}
\title[Optimal energy decay for the wave-heat system]{Optimal energy decay for the wave-heat system on a rectangular domain}
\author[C.J.K. Batty]{Charles Batty}
\address[C.J.K. Batty]{St John's College, St Giles, Oxford\;\;OX1 3JP, UK}
\email{charles.batty@sjc.ox.ac.uk}

\author[L. Paunonen]{Lassi Paunonen}
\address[L. Paunonen]{Department of Mathematics, Tampere University of Technology, PO.\ Box 553, 33101 Tampere, Finland}
\email{lassi.paunonen@tut.fi}

\author[D. Seifert]{David Seifert}
\address[D. Seifert]{St John's College, St Giles, Oxford\;\;OX1 3JP, UK}
\email{david.seifert@sjc.ox.ac.uk}

\begin{abstract}
We study the rate of energy decay for solutions of a coupled wave-heat system on a rectangular domain. Using techniques from the theory of $C_0$-semigroups, and in particular a well-known result due to Borichev and Tomilov, we prove that the energy of classical solutions decays like $t^{-2/3}$ as $t\to\infty$. This rate is moreover shown to be sharp. Our result implies in particular that a general estimate in the literature, which predicts at least logarithmic decay and is known to be best possible in general, is suboptimal  in the special case under consideration here. Our strategy of proof involves direct estimates based on separation of variables and a refined version of the technique developed in our earlier paper for a one-dimensional wave-heat system.

\end{abstract}

\subjclass[2010]{35M33, 35B40, 47D06 (34K30, 37A25).}
\keywords{Wave equation, heat equation, coupled, rectangular domain, energy, rates of decay, $C_0$-semigroups, resolvent estimates.}
\thanks{This work was carried out while L.\ Paunonen visited Oxford from January to June 2017 and while D.\ Seifert  visited Tampere University of Technology in September 2017. The visits were funded by the Academy of Finland grants 298182 and 310489, respectively, held by L.\ Paunonen. All three authors would like to express their gratitude to the two anonymous referees whose thoughtful comments improved the paper.}

\maketitle

\section{Introduction}\label{sec:intro} 

In this paper we study the rate of energy decay for solutions of a coupled wave-heat system on a rectangular domain. Let $\Omega_-=(-1,0)\times(0,1)$ and $\Omega_+=(0,1)\times(0,1)$. We consider the following model:
\begin{equation}\label{eq:PDE}
\left\{\begin{aligned}
u_{tt}(x,y,t)&=\Delta u(x,y,t), \hspace{40pt}&(x,y)\in \Omega_-,\;t>0,\\
w_{t}(x,y,t)&=\Delta w(x,y,t), &(x,y)\in \Omega_+,\;t>0,\\[3pt]
u(-1,y,t)&=w(1,y,t)=0, &y\in(0,1),\;t>0,\\
u(x,0,t)&=u(x,1,t)=0, &x\in(-1,0),\; t>0,\\
w(x,0,t)&=w(x,1,t)=0, & x\in (0,1),\; t>0,\\
u_t(0,y,t)&=w(0,y,t),& y\in(0,1),\;t>0,\\
u_x(0,y,t)&=w_x(0,y,t),& y\in(0,1),\;t>0,\\[3pt]
u(x,y,0)&=u_0(x,y),&(x,y)\in\Omega_-,\\
u_t(x,y,0)&=v_0(x,y),&(x,y)\in\Omega_-,\\
w(x,y,0)&=w_0(x,y),&(x,y)\in\Omega_+,
\end{aligned}\right.
\end{equation}
for suitable initial data $u_0,v_0$ defined on $\Omega_-$ and $w_0$ defined on $\Omega_+$. Given an initial vector $z_0=(u_0,v_0,w_0)$ of initial data, the energy $E_{z_0}(t)$ at time $t\ge0$ of the corresponding solution is defined as 
$$E_{z_0}(t)=\frac12\int_\Omega\Big( |\nabla u(x,y,t)|^2+|u_t(x,y,t)|^2+|w(x,y,t)|^2\Big)\,\dd(x,y),\quad t\ge 0,$$
where $u$ and $w$ have been extended by zero to $\Omega=(-1,1)\times(0,1).$ For sufficiently regular solutions we see by a direct calculation that
$$ \dot{E}_{z_0}(t)=-\int_{\Omega_+}|\nabla w(x,y,t)|^2\,\dd(x,y),\quad t\ge0,$$
so the energy is non-increasing in time. The aim in this paper is to establish a sharp estimate for the \emph{rate} at which the energy of sufficiently regular solutions decays to zero as $t\to\infty$.

Questions of this type, often for more general domains, have received a considerable amount of attention over the last decade or so, going back at least to \cite{RauZha05}. The underlying motivation is to understand the physical phenomenon of \emph{structure-fluid} interaction, for which the wave-heat model serves as a linear, and hence more tractable, approximate model retaining many of the key features of more realistic models. Some of the most general results in this area are obtained in \cite{Duy07,ZhaZua07}, and they show in particular that the rate of energy decay one can expect depends crucially on the geometry of the wave and heat domains. In fact, as is customary in the theory of damped wave equations, one may consider billiard ball trajectories beginning at points in the wave domain and reflecting according to the laws of optics along the Dirichlet part of the boundary. According to \cite{Duy07,ZhaZua07}, if there exist so-called \emph{trapped  rays}, that is to say trajectories which never enter the heat domain, then one obtains a logarithmic rate of energy decay for classical solutions, and for general geometries this result is sharp. On the other hand, if every trajectory eventually enters the heat domain then for classical solutions one obtains $E_{z_0}(t)=o(t^{-(2-\varepsilon)})$ as $t\to\infty$ for every $\varepsilon>0$. In fact, it was shown recently in \cite{AvaLas16} using a delicate microlocal argument that if the heat domain completely surrounds the wave domain one may even take $\varepsilon=0$, which gives the best possible estimate; see also \cite{AvaTri13}. Our main objective in the present paper is to obtain a sharp estimate for the rate of energy decay of classical solutions to our wave-heat system~\eqref{eq:PDE}. Our result in particular illustrates that the theoretical logarithmic estimate for the rate of energy decay is in general not optimal in the case where, as in \eqref{eq:PDE}, we have a rectangular domain with  trapped rays. Our main results concern the rate of energy decay of so-called classical solutions of \eqref{eq:PDE}, to be introduced formally in Section~\ref{sec:sg} below, and may be summarised as follows.

\begin{thm}\label{thm:int}
For classical solutions of the system~\eqref{eq:PDE} with initial data vector $z_0$ the energy satisfies $E_{z_0}(t)=o(t^{-2/3})$ as $t\to\infty$, and moreover this rate is sharp.
\end{thm}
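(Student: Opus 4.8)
The plan is to recast the PDE system~\eqref{eq:PDE} as an abstract Cauchy problem $\dot z = Az$ on a suitable Hilbert space $\mathcal{H}$, with $A$ the generator of a contraction $C_0$-semigroup $\T$; the energy $E_{z_0}(t)$ is then (up to a constant) $\tfrac12\|T(t)z_0\|_{\mathcal{H}}^2$. I would take $\mathcal{H}$ to be the product of $H^1_0(\Omega_-)$, $L^2(\Omega_-)$ and $L^2(\Omega_+)$, encoding the wave displacement, wave velocity, and heat variables respectively, with the transmission conditions $u_t(0,\cdot)=w(0,\cdot)$ and $u_x(0,\cdot)=w_x(0,\cdot)$ built into the domain of $A$. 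Dissipativity follows from the energy identity $\dot E_{z_0}=-\int_{\Omega_+}|\nabla w|^2$. Since the resolvent of $A$ is compact, the spectrum is discrete, and a standard argument shows $i\RR\subseteq\rho(A)$: indeed, a purely imaginary eigenvalue would force $\nabla w\equiv0$ on $\Omega_+$, hence $w\equiv0$ there, and then the transmission conditions would make the wave part an eigenfunction of an undamped problem that cannot survive the coupling, giving a contradiction.

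With this setup in place, the key tool is the quantified Tauberian theorem of Borichev and Tomilov: for a bounded $C_0$-semigroup with $i\RR\subseteq\rho(A)$, one has $\|T(t)A^{-1}\|=o(t^{-1/\alpha})$ as $t\to\infty$ precisely when $\|(is-A)^{-1}\|=O(|s|^\alpha)$ as $|s|\to\infty$. To obtain the decay rate $t^{-2/3}$ for the energy, which corresponds to $\tfrac12\|T(t)z_0\|^2$ with $z_0$ ranging over $\D(A)$, I therefore need the resolvent bound $\|(is-A)^{-1}\|=O(|s|^{4/3})$ as $|s|\to\infty$, so that $\alpha=4/3$ and $1/\alpha=3/4$ yields $\|T(t)A^{-1}\|=o(t^{-3/4})$, whence $\|T(t)z_0\|=o(t^{-3/4})$ for $z_0\in\D(A)$ and thus $E_{z_0}(t)=o(t^{-3/2})$ — except the stated rate is $t^{-2/3}$, so the correct exponent is $\alpha=3$ giving $\|T(t)A^{-1}\|=o(t^{-1/3})$ and $E_{z_0}(t)=o(t^{-2/3})$. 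Hence the central analytic task is to prove the resolvent estimate $\|(is-A)^{-1}\|=O(|s|^{3})$ as $|s|\to\infty$.

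The heart of the proof, and the main obstacle, is establishing this polynomial resolvent bound. The plan is to exploit the rectangular geometry via separation of variables in the transverse $y$-variable: expanding all fields in the Dirichlet sine basis $\{\sin(n\pi y)\}_{n\ge1}$ of $L^2(0,1)$ decouples the two-dimensional problem into a family of one-dimensional wave-heat problems on $(-1,0)$ and $(0,1)$, each indexed by $n$ and with an effective spectral parameter shifted by $n^2\pi^2$. For each mode one solves the resulting system of ODEs explicitly, matching at the interface $x=0$ through the transmission conditions, and estimates the norm of the $n$-th resolvent block uniformly in $n$; the dangerous regime is where the frequency $s$ resonates with a transverse mode, i.e.\ $|s|\sim n\pi$, corresponding physically to the trapped rays travelling parallel to the interface. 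I expect the delicate point to be extracting the sharp power of $|s|$ from these resonant modes while summing the contributions over all $n$, refining the one-dimensional technique of our earlier paper; careful attention to the behaviour of the hyperbolic and trigonometric functions appearing in the solution of each ODE block, and their interaction near resonance, is what produces the exponent $3$ rather than a smaller power.

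Finally, to show sharpness one constructs an explicit sequence of approximate eigenfunctions, or equivalently quasimodes, concentrated on a single resonant transverse mode, demonstrating that the resolvent bound $\|(is_n-A)^{-1}\|\gtrsim|s_n|^{3}$ holds along a sequence $s_n\to\infty$ with $|s_n|\sim n\pi$. By the converse direction of the Borichev--Tomilov theorem, or by a direct argument producing initial data for which the energy decays no faster than $t^{-2/3}$, this lower bound on the resolvent rules out any faster uniform decay rate, establishing that the exponent $2/3$ cannot be improved.
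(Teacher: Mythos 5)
Your overall strategy is the same as the paper's: semigroup formulation, Borichev--Tomilov with $\alpha=3$, separation of variables in $y$, and identification of the resonant regime $|s|\sim n\pi$ as the source of the exponent $3$. However, there are two genuine gaps. The first is a factual error at the well-posedness stage: the resolvent of $A$ is \emph{not} compact for this coupled system (see \cite[Theorem~2]{ZhaZua07}, as noted in Section~\ref{sec:sg}), so the spectrum need not be discrete, and ruling out purely imaginary \emph{eigenvalues} does not yield $i\RR\subseteq\rho(A)$ --- approximate point or residual spectrum could still meet the axis. One must instead prove directly that $is-A$ is boundedly invertible for each $s\in\RR$, which the paper does by expanding in the transverse sine basis, solving the resulting one-dimensional systems \eqref{eq:res_sys} explicitly, and establishing the uniform bounds \eqref{eq:list}; your unique-continuation argument for eigenvalues is correct as far as it goes, but it cannot replace this step. (A minor related slip: the first component of the state space should be $H^1_\Gamma(\Omega_-)$, whose trace vanishes only on $\partial\Omega_-\setminus\Gamma$, not $H^1_0(\Omega_-)$, since the wave displacement must be free at the interface.)

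The second gap is that the central estimate $\|R(is,A)\|=O(|s|^3)$ is only announced as a plan, not proved, and this is where essentially all of the work in the paper lies: a quadratic-form argument (integration by parts plus Poincar\'e) in the non-resonant regime $s^2\le k^2\pi^2+1$, and, in the regime $s^2>k^2\pi^2+1$, explicit solution formulas together with a lower bound, uniform in $k$ for large $|s|$, on the quantity $e^{-\R q_k(s)}|\det M_k(s)|$ coming from the interface-matching matrix. Without carrying out these estimates, the exponent $3$ is an unsupported guess; indeed nothing in your outline distinguishes it from, say, $2$ or $4$. Your sharpness sketch is viable but differs from the paper, which does not construct quasimodes: it instead locates genuine eigenvalues $\lambda_k^\pm$ with $\I\lambda_k^\pm\sim\pm k\pi$ and $-Ck^{-3}\le\R\lambda_k^\pm<0$ by applying Rouch\'e's theorem to the characteristic function from Theorem~\ref{thm:sg} on disks of radius $k^{-3}$ centred at $\pm i\pi\sqrt{k^2+1}$, and then uses $\|R(is,A)\|\ge\dist(is,\sigma(A))^{-1}$. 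Note also that passing from a resolvent lower bound to the precise sharpness statement (no individual classical solution class can do better) requires the uniform boundedness principle together with \cite[Proposition~1.3]{BD08}, a step your appeal to the ``converse direction'' of Borichev--Tomilov only covers for uniform rates over $D(A)$.
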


Our approach is based on the theory of $C_0$-semigroups, and in particular on the a result due to Borichev and Tomilov \cite{BT10}, which reduces the problem of determining the rate of energy decay to estimating the norm of the resolvent operator along the imaginary axis; see also \cite{AvaLas16, AvaTri13, BaPaSe16}. Our argument divides naturally into three steps. First, in Section~\ref{sec:sg}, we show that the  system in~\eqref{eq:PDE} is well posed in the sense of $C_0$-semigroups and we describe the spectrum $\sigma(A)$ of the corresponding infinitesimal generator $A$, showing in particular that $\sigma(A)$ is contained in the open left-half plane. Then in Section~\ref{sec:res} we turn to estimating the resolvent of $A$ along the imaginary axis. We first obtain an upper bound on the growth of the resolvent in Theorem~\ref{thm:ub} and then we prove in Theorem~\ref{thm:opt} that our upper bound is sharp. Finally, in Section~\ref{sec:energy} we put together the pieces and apply the Borichev-Tomilov theorem to obtain a suitable form of Theorem~\ref{thm:int}. Throughout Sections~\ref{sec:sg} and \ref{sec:res} we take advantage of the special rectangular geometry of our domain, which allows us to obtain optimal bounds by means of direct estimates as opposed, for instance, to indirect microlocal arguments. More specifically, we use separation of variables, thus decomposing the two-dimensional problem into a family of one-dimensional problems which can be dealt with using techniques akin to those developed in our earlier paper \cite{BaPaSe16}; see also \cite{BurHit07,Sta17} for similar arguments in the context of damped wave equations.  

Our notation is standard throughout. Given a closed operator $A$ on a Hilbert space, which will always be assumed to be complex, we denote its domain by $D(A)$ and its kernel by $\Ker(A)$. The spectrum of $A$ is denoted by $\sigma(A)$, and given $\lambda\in\CC\setminus\sigma(A)$ we write $R(\lambda,A)$ for the resolvent operator $(\lambda-A)^{-1}$. 
For real-valued quantities $p$ and $q$, we use the notation $p\lesssim q$ to indicate that $p\le Cq$ for some constant $C>0$ which is independent of all the parameters that are free to vary in a given situation. We write $p\asymp q$ if $p\lesssim q$ and $q\lesssim p$. Furthermore, we make use where convenient of standard asymptotic notation such as `big O', `little o' and $\sim$.
We let $\NN=\{1,2,3,\dots\}$ and write $\CC_-$ for the open left half-plane $\{\lambda\in\CC:\R\lambda<0\}$.

\section{Well-posedness}\label{sec:sg} 

Our approach to studying \eqref{eq:PDE} is based on the theory of $C_0$-semigroups, so we first recast our system  in the form of an abstract Cauchy problem. Let $\Gamma=\partial\Omega_-\cap\partial\Omega_+$, which is simply the vertical line segment $\{(0,y):0\le y\le 1\}$, and let $H_\Gamma^1(\Omega_\mp)$ denote the set of functions in $H^1(\Omega_\mp)$ whose trace vanishes on $\partial\Omega_\mp\setminus\Gamma$, endowed with the equivalent Poincar\'e norm $\|u\|=\|\nabla u\|_{L^2}$, $u\in H_\Gamma^1(\Omega_\mp)$. We let $Z$ denote the  Hilbert space $H_\Gamma^1(\Omega_-)\times L^2(\Omega_-)\times L^2(\Omega_+)$, endowed with its natural inner product, and let $Z_0$ denote the subspace of $Z$ consisting of all $(u,v,w)\in Z$ such that $\Delta u\in L^2(\Omega_-),\ v\in H^1_\Gamma(\Omega_-),\ w\in H^1_\Gamma(\Omega_+)$ and $\Delta w\in L^2(\Omega_+)$. Define the operator $A$ by $A(u,v,w)=(v, \Delta u,\Delta w)$ for $(u,v,w)$ in the domain
$$D(A)=\big\{(u,v,w)\in Z_0: v|_\Gamma=w|_\Gamma\mbox{ and }u_x|_\Gamma=w_x|_\Gamma\big\}$$
of $A$, where the coupling conditions along $\Gamma$ appearing in the definition of $D(A)$ are to be understood in the sense of traces. Then $A$ is a closed and densely defined operator on $Z$ but does not have compact resolvent; see \cite[Theorem~2]{ZhaZua07}. Letting $z(t)=(u(\cdot,t),u_t(\cdot,t),w(\cdot,t))$, $t\ge0$, we may rewrite the wave-heat system \eqref{eq:PDE} as
\begin{equation}\label{eq:ACP}
\left\{\begin{aligned}
\dot{z}(t)&=Az(t),\quad t\ge0,\\
z(0)&=z_0,
\end{aligned}\right.
\end{equation}
where $z_0\in Z$. Our first result, Theorem~\ref{thm:sg} below, establishes among other things that the operator $A$ is the infinitesimal generator of a $C_0$-semigroup $\T$ of contractions on $Z$. It follows that the unique solution of \eqref{eq:ACP} is given by $z(t)=T(t)z_0$, $t\ge0$. This solution in general satisfies \eqref{eq:ACP} only in the so-called mild sense, but it is a solution in the classical sense if, and in fact and only if, $z_0\in D(A)$; see for instance \cite{ABHN11} for details on the theory of $C_0$-semigroups. It is for such classical solutions that we shall establish, in Section~\ref{sec:energy} below, a sharp estimate on the rate of energy decay in the sense described in Section~\ref{sec:intro}. In what follows we choose the square root function with a branch cut along the negative real axis.

\begin{thm}\label{thm:sg}
The operator $A$ generates a $C_0$-semigroup $\T$ of contractions on $Z$ and $\sigma(A)\subseteq\CC_-$. Moreover, if we let $p_k(\lambda)=(k^2\pi^2+\lambda^2)^{1/2}$ and $q_k(\lambda)=(k^2\pi^2+\lambda)^{1/2}$ for $k\in\NN$ and $\lambda\in\CC$ then the point spectrum of $A$ satisfies $\sigma_p(A)=\bigcup_{k\in\NN}\Sigma_k$, where, for $k\in\NN$,
$$\Sigma_k=\left\{\lambda\in\CC_-:p_k(\lambda),q_k(\lambda)\ne0\mbox{ and }\lambda\frac{\tanh p_k(\lambda)}{p_k(\lambda)}+\frac{\tanh q_k(\lambda)}{q_k(\lambda)}=0\right\}.$$

\end{thm}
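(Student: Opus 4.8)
The plan is to treat the three assertions in turn, using the Lumer--Phillips theorem for generation and a separation of variables in the $y$-variable for the spectral description.

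First I would verify that $A$ is dissipative and that $\lambda-A$ is surjective for some $\lambda>0$, so that the Lumer--Phillips theorem (see \cite{ABHN11}) applies. For $z=(u,v,w)\in D(A)$ the inner product of $Z$ gives $\langle Az,z\rangle=\langle\nabla v,\nabla u\rangle+\langle\Delta u,v\rangle+\langle\Delta w,w\rangle$, and two integrations by parts, using that the traces of $v$ and $w$ vanish on $\partial\Omega_\mp\setminus\Gamma$, reduce the real part to the boundary terms on $\Gamma$. The coupling conditions $v|_\Gamma=w|_\Gamma$ and $u_x|_\Gamma=w_x|_\Gamma$ then cause the two boundary contributions to cancel, leaving
$$\R\langle Az,z\rangle=-\|\nabla w\|_{L^2(\Omega_+)}^2\le0,$$
which is precisely the energy identity from Section~\ref{sec:intro}. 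For the range condition I would solve $(\lambda-A)z=f$ by eliminating $v=\lambda u-f_1$ and applying the Lax--Milgram lemma to the coercive form associated with the elliptic system $\lambda^2u-\Delta u=f_2+\lambda f_1$ on $\Omega_-$ and $\lambda w-\Delta w=f_3$ on $\Omega_+$, coupled along $\Gamma$. This yields the contraction semigroup and, as a standard consequence, $\sigma(A)\subseteq\overline{\CC_-}$.

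Next I would compute the point spectrum by separating variables. Writing $e_k(y)=\sin(k\pi y)$, the subspaces $Z_k$ spanned by the modes $f(x)e_k(y)$ reduce $A$ to a family of one-dimensional operators $A_k$, and an eigenfunction of $A$ at $\lambda$ in the $k$-th mode has wave part $c_k\sinh(p_k(\lambda)(x+1))$ on $\Omega_-$ and heat part $d_k\sinh(q_k(\lambda)(x-1))$ on $\Omega_+$, these being the solutions of the reduced ODEs that already satisfy the outer Dirichlet conditions. Imposing the coupling relations $\lambda c_k\sinh p_k(\lambda)=-d_k\sinh q_k(\lambda)$ and $c_kp_k(\lambda)\cosh p_k(\lambda)=d_kq_k(\lambda)\cosh q_k(\lambda)$ at $x=0$, a nontrivial pair $(c_k,d_k)$ exists precisely when the $2\times2$ determinant vanishes; dividing through by $p_k(\lambda)q_k(\lambda)\cosh p_k(\lambda)\cosh q_k(\lambda)$ turns this into the stated equation $\lambda\tanh p_k(\lambda)/p_k(\lambda)+\tanh q_k(\lambda)/q_k(\lambda)=0$. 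This gives $\sigma_p(A)=\bigcup_{k\in\NN}\Sigma_k$, the excluded values $p_k(\lambda)=0$ or $q_k(\lambda)=0$ (namely $\lambda=\pm ik\pi$ and $\lambda=-k^2\pi^2$) being checked directly to produce no additional eigenvalues in $\CC_-$.

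It then remains to upgrade $\sigma(A)\subseteq\overline{\CC_-}$ to $\sigma(A)\subseteq\CC_-$, that is, to exclude the imaginary axis. That there are no eigenvalues there follows from the dissipation identity: if $\lambda=is$ with $s\in\RR$ and $Az=\lambda z$, then $0=\R\langle\lambda z,z\rangle=-\|\nabla w\|_{L^2(\Omega_+)}^2$, so $w\equiv0$; the coupling conditions force $v|_\Gamma=0$ and $u_x|_\Gamma=0$, and since $v=\lambda u$ this gives $u$ vanishing Cauchy data on $\Gamma$, whence $u\equiv0$ by unique continuation (or mode by mode from $a_k''=p_k(\lambda)^2a_k$ with $a_k(-1)=a_k(0)=a_k'(0)=0$), the case $\lambda=0$ being covered by the invertibility of $A$ established above. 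To obtain the full inclusion I would use that $A=\bigoplus_k A_k$ and that each $A_k$, being one-dimensional with compact resolvent, satisfies $\sigma(A_k)=\Sigma_k\subseteq\CC_-$; hence $is$ lies in the resolvent set of every $A_k$, and membership of $is$ in the resolvent set of $A$ reduces to the uniform bound $\sup_k\|R(is,A_k)\|<\infty$. I expect this uniform control to be the main obstacle, since $A$ lacks compact resolvent and the roots of the transcendental equations accumulate (as the large-$k$ asymptotics $p_k(\lambda),q_k(\lambda)\sim k\pi$ and $\tanh\to1$ show) near $\lambda=-1$; keeping the mode-wise resolvents bounded on the imaginary axis is exactly the quantitative input refined in Section~\ref{sec:res}, and a crude version of those estimates suffices here to close the argument.
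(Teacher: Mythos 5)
Your proposal is correct and follows essentially the same route as the paper: dissipativity plus a range condition and the Lumer--Phillips theorem for generation, separation of variables into the sine modes $e_k$ with the explicit $\sinh$ solutions and the vanishing of a $2\times2$ determinant (including the separate check when $q_k(\lambda)=0$) for the point spectrum, and mode-wise resolvent estimates, uniform in $k$ and deferred to the more careful analysis of Section~\ref{sec:res}, to exclude the imaginary axis from $\sigma(A)$. The only cosmetic differences are that the paper obtains the range condition by invoking the invertibility argument of \cite{ZhaZua07} rather than Lax--Milgram, and that it has no need for your separate unique-continuation step ruling out imaginary eigenvalues, since the explicit mode-wise solution of $(is-A)(u,v,w)=z_0$ with uniform bounds already excludes all imaginary spectrum, point or otherwise.
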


\begin{proof}
For $z_0=(u,v,w)\in D(A)$ a simple calculation using Green's theorem on each of the regions $\Omega_-$ and $\Omega_+$ shows that
\begin{equation}\label{eq:diss}
\R\langle Az_0,z_0\rangle=-\int_{\Omega_+}|\nabla w(x,y)|^2\,\dd(x,y)\le 0,
\end{equation}
so the operator $A$ is dissipative. Moreover, proceeding as in the proof of \cite[Theorem~1]{ZhaZua07} we see that $A$ is invertible. Since the resolvent set is open we may find $\lambda>0$ such that $\lambda\not\in\sigma(A)$, so that $\lambda-A$ is invertible and in particular surjective. Hence  $A$ generates a $C_0$-semigroup $\T$ of contractions on $Z$ by the Lumer-Phillips theorem, and it follows from standard semigroup theory that $\sigma(A)\subseteq\{\lambda\in\CC:\R\lambda\le0\}$. In order to see that  $\sigma(A)$ contains no purely imaginary points let $s\in \RR$. We wish to show that for every $z_0=(f,g,h)\in Z$ there exists $(u,v,w)\in D(A)$ such that $(is-A)(u,v,w)=z_0$ and $\|(u,v,w)\|\le C\|z_0\|$ for some $C>0$ which is independent of $z_0$. Let $e_k(y)=\sqrt{2}\sin(k\pi y)$ for $k\in\NN$ and $y\in(0,1)$, recalling that $\{e_k:k\in\NN\}$ is an orthonormal basis for $L^2(0,1)$. We may expand $u$ into a series of the form
$$u(x,y)=\sum_{k=1}^\infty u_k(x)e_k(y),\quad (x,y)\in \Omega_-,$$
with convergence in the norm of $L^2(\Omega_-)$, and we may similarly decompose $v,w, f, g$ and $h$. This gives rise to functions $u_k\in H^2(-1,0)\cap H_{-}^1(-1,0)$, $v_k\in H_{-}^1(-1,0)$, $w_k\in H^2(0,1)\cap H_{+}^1(0,1)$, $f_k\in H^1_-(-1,0)$, $g_k\in L^2(-1,0)$ and $h_k\in L^2(0,1)$, $k\in\NN$, where $H_{-}^1(-1,0)=\{u_0\in H^1(-1,0):u_0(-1)=0\}$ and $H_{+}^1(0,1)=\{w_0\in H^1(0,1):w_0(1)=0\}$. Using orthonormality of the set $\{e_k:k\in\NN\}$, our problem turns into the system of one-dimensional equations 
\begin{equation}\label{eq:res_sys}
\left\{\begin{aligned}
v_k(x)&=is\,  u_k(x)-f_k(x),\quad &x\in (-1,0),\\
u_k''(x)&=(k^2\pi^2-s^2)u_k(x)-is\,f_k(x)-g_k(s),&x\in(-1,0),\\
w_k''(x)&=(k^2\pi^2+is)w_k(x)-h_k(x),&x\in(0,1),
\end{aligned}\right.
\end{equation}
with the boundary conditions $u_k(-1)=w_k(1)=0$, $v_k(0)=w_k(0)$ and $u_k'(0)=w_k'(0)$, $k\in\NN$. These equations can be solved for each $k\in\NN$, and in fact we shall do so explicitly in the proof of Theorem~\ref{thm:ub} below. In order to show that $\|(u,v,w)\|\le C\|z_0\|$ for some $C>0$ we note first that
$$\begin{aligned}
\|z_0\|^2&=\sum_{k=1}^\infty\Big(k^2\pi^2\|f_k\|^2_{L^2}+\|f_k'\|^2_{L^2}+\|g_k\|^2_{L^2}+\|h_k\|^2_{L^2}\Big),
\end{aligned}$$
and we obtain an analogous expression for the norm of $(u,v,w)$. Let us write $z_k=(f_k,g_k,h_k)$ and 
$$\|z_k\|=\big(k^2\pi^2\|f_k\|_{L^2}^2+\|f_k'\|_{L^2}^2+\|g_k\|_{L^2}^2+\|h_k\|_{L^2}^2\big)^{1/2},\quad k\in\NN.$$ It suffices to show that 
\begin{equation}\label{eq:list}
 k\|u_k\|_{L^2},\|u_k'\|_{L^2},\|v_k\|_{L^2},\|w_k\|_{L^2}\lesssim\|z_k\|,\quad k\in\NN,
 \end{equation}
where the implicit constant is independent of $z_0$ and $k$. Since we know that the resolvent set contains a neighbourhood of zero we may assume that $|s|\ge s_0$ for some $s_0>0$. We omit this argument here, since a more careful version of it is presented in the proof of Theorem~\ref{thm:ub} below, where we moreover keep track of how the implicit constant in \eqref{eq:list} depends on $|s|$.
 
It remains to describe the point spectrum $\sigma_p(A)$ of $A$. Let $\lambda\in\CC_-$ and suppose that $z_0=(u,v,w)\in \Ker(\lambda-A)$. Expanding the components of $z_0$ as  above we obtain the system  
\begin{equation}\label{eq:ev_sys}
\left\{\begin{aligned}
v_k(x)&=\lambda u_k(x),\quad &x\in (-1,0),\\
u_k''(x)&=(k^2\pi^2+\lambda^2)u_k(x),&x\in(-1,0),\\
w_k''(x)&=(k^2\pi^2+\lambda)w_k(x),&x\in(0,1),
\end{aligned}\right.
\end{equation}
together with the boundary conditions $u_k(-1)=w_k(1)=0$, $v_k(0)=w_k(0)$ and $u_k'(0)=w_k'(0)$, $k\in\NN$. Since $\lambda\in\CC_-$ we have $p_k(\lambda)\ne0$ for all $k\in\NN$. Let us assume for the moment that we also have $q_k(\lambda)\ne0$, $k\in\NN$. It is then straightforward to show that the ordinary differential equations in \eqref{eq:ev_sys} together with the boundary conditions at $x=\pm1$ imply 
$$\begin{aligned}
u_k(x)&=a_k(\lambda)\sinh(p_k(\lambda) (1+x)),\quad &x\in(-1,0),\\
w_k(x)&=b_k(\lambda)\sinh(q_k(\lambda)(1-x)),&x\in(0,1),
\end{aligned}$$
for some constants $a_k(\lambda),b_k(\lambda)\in\CC$, $k\in\NN$, and now the coupling conditions at $x=0$ can be formulated as
\begin{equation}\label{eq:matrix}
\begin{pmatrix}
\lambda\sinh p_k(\lambda)&-\sinh q_k(\lambda)\\
p_k(\lambda)\cosh p_k(\lambda) &q_k(\lambda)\cosh q_k(\lambda)
\end{pmatrix}
\begin{pmatrix}
a_k(\lambda)\\b_k(\lambda)
\end{pmatrix}=
\begin{pmatrix}
0\\0
\end{pmatrix},\quad k\in\NN.
\end{equation}
Writing $M_k(\lambda)$ for the $2\times2$ matrix appearing in this equation we see that $z_0\ne0$ if and only if $\det M_k(\lambda)=0$ for some $k\in\NN$. On the other hand, if $q_k(\lambda)=0$ for some $k\in\NN$ then $w_k(x)=b_k(\lambda)(x-1)$, $x\in(0,1)$, for some constant $b_k(\lambda)\in\CC$,  and it is straightforward to verify that the coupling conditions at $x=0$ imply $a_k(\lambda)=b_k(\lambda)=0$. The result now follows.
\end{proof}

\section{Resolvent estimates}\label{sec:res}

We now study the growth behaviour of the resolvent norms $\|R(is,A)\|$ as $|s|\to\infty.$ First, in Section~\ref{sec:ub} we obtain an upper bound for the resolvent norms, which we then prove to be optimal in Section~\ref{sec:opt}. These results will eventually allow us to obtain a sharp estimate on the rate of energy decay for classical solutions, which is done  in Section~\ref{sec:energy} below.

\subsection{An upper bound}\label{sec:ub}

We establish the following result.

\begin{thm}\label{thm:ub}
We have $\|R(is,A)\|=O(|s|^3)$ as $|s|\to\infty$.
\end{thm}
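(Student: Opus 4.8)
The plan is to exploit the separation of variables from the proof of Theorem~\ref{thm:sg}, which reduces the resolvent equation $(is-A)(u,v,w)=(f,g,h)$ to the family of one-dimensional systems \eqref{eq:res_sys} indexed by $k\in\NN$. Since the norms on $Z$ diagonalise over the basis $\{e_k\}$, it suffices to solve \eqref{eq:res_sys} for each $k$ and to prove the $|s|$-explicit refinement of \eqref{eq:list}, namely
\begin{equation*}
k\|u_k\|_{L^2},\ \|u_k'\|_{L^2},\ \|v_k\|_{L^2},\ \|w_k\|_{L^2}\lesssim|s|^3\|z_k\|,
\end{equation*}
with an implicit constant independent of $k$ and of $s$ for $|s|$ large. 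Squaring and summing over $k\in\NN$ then gives $\|(u,v,w)\|\lesssim|s|^3\|z_0\|$, which is the assertion.

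First I would solve each system in \eqref{eq:res_sys} explicitly. Writing $p=p_k(is)$ and $q=q_k(is)$, the conditions $u_k(-1)=0$ and $w_k(1)=0$ single out the homogeneous profiles $\sinh(p(1+\cdot))$ and $\sinh(q(1-\cdot))$, so that $u_k=c_k\sinh(p(1+\cdot))+u_k^p$ and $w_k=d_k\sinh(q(1-\cdot))+w_k^p$, where $u_k^p$ and $w_k^p$ are the particular solutions obtained by variation of parameters and chosen to vanish together with their first derivatives at $x=-1$ and $x=1$ respectively. The two coupling conditions at $x=0$ then turn the determination of $(c_k,d_k)$ into a linear system $M_k(is)(c_k,d_k)^{\mathsf{T}}=r_k$, where $M_k(is)$ is exactly the matrix in \eqref{eq:matrix} and $r_k$ is assembled from $f_k(0)$ and the boundary traces $u_k^p(0),(u_k^p)'(0),w_k^p(0),(w_k^p)'(0)$. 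On inverting, $c_k$ and $d_k$ carry $\det M_k(is)$ in their denominators; Theorem~\ref{thm:sg} ensures this is nonzero for real $s$, and what is needed here is a quantitative lower bound.

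The work then splits into two estimates. On the one hand I would bound the particular solutions and their traces; the only delicate point is the wave part, since for $k\pi<|s|$ the symbol $p$ is purely imaginary and the corresponding Green's function is oscillatory with amplitude $\asymp|p|^{-1}$, giving for instance $|u_k^p(0)|\lesssim|p|^{-1}(|s|\|f_k\|_{L^2}+\|g_k\|_{L^2})$. On the other hand, and this is the crux, I would bound $|\det M_k(is)|$ from below. Factoring out $\cosh p\cosh q$ reduces this to controlling $D_k:=is\,q\tanh p+p\tanh q$. Because $\R q_k(is)\gtrsim\max(k,|s|^{1/2})$, one has $\tanh q=1+O(e^{-c|s|^{1/2}})$ uniformly, so the heat part contributes only a harmless constant and all the oscillation sits in $\tanh p$. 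Splitting into the regimes $k\pi\gg|s|$ (where $p$ is large and real and $|D_k|\gtrsim|s|k$), $k\pi\ll|s|$, and the transitional band $k\pi\asymp|s|$ then yields the lower bound, which combines with the trace estimates to give the per-mode bound.

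The main obstacle is precisely this lower bound in the resonant band $k\pi\asymp|s|$, where $p$ is small and the wave symbol nearly resonates with the interval $(-1,0)$. Here $\cosh p$ may vanish while $\tanh p$ is large, so the two terms of $D_k$ cannot be estimated in isolation; ruling out cancellation calls for a careful analysis of the real and imaginary parts of $\cos\beta\,D_k$, with $p=i\beta$ and $\beta=(s^2-k^2\pi^2)^{1/2}$. I expect the extremal case to occur when $\beta$ is close to a nonzero multiple of $\pi$, so that $\sin\beta\approx0$ and $|D_k|$ is comparable only to $\beta$, whence $|\det M_k(is)|\asymp\beta\,|\cosh p\cosh q|$. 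Tracing the ensuing amplification of $c_k$, through the factor $is$ in $v_k=is\,u_k-f_k$, the factor $is$ entering the coupling datum via $u_k^p(0)$, and the size $q_k(is)\asymp|s|$ in the adjugate of $M_k(is)$, all weighed against the gain $\|f_k\|_{L^2}\lesssim\|z_k\|/(k\pi)\approx\|z_k\|/|s|$ available near resonance, one finds the critical contribution to be of order $|s|^3\|z_k\|$ and no larger, which is exactly the exponent in the statement.
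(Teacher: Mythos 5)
Your plan is, in its critical part, the same as the paper's proof. The paper also reduces to \eqref{eq:res_sys} by separation of variables, solves each mode by variation of parameters subject to the conditions at $x=\pm1$, encodes the coupling at $x=0$ through the matrix $M_k$ of \eqref{eq:matrix}, and rests on exactly the lower bound you identify as the crux. Writing $p_k(s)=i\beta$ with $\beta=(s^2-k^2\pi^2)^{1/2}$, the paper factors $e^{\R q_k(s)}$ out of $|\det M_k(s)|$ and shows that
\[
s^2\big(\R q_k(s)\big)^2\sin^2\beta+\big(s\,\I q_k(s)\sin\beta-\beta\cos\beta\big)^2\gtrsim 1 ,
\]
i.e.\ $|\det M_k(s)|\gtrsim e^{\R q_k(s)}$ uniformly in $k$ once $|s|$ is large; this quantity is precisely the modulus of your $\cos\beta\,D_k$ (up to the correction $\tanh q_k(s)=1+O(e^{-c\R q_k(s)})$), and the extremal configuration you predict --- $\sin\beta\approx 0$ with $\beta$ near $\pi$ and $k\asymp|s|$ --- is exactly the one realized by the eigenvalues constructed in Theorem~\ref{thm:opt}. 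Your bookkeeping of powers of $|s|$ in the resonant band, and the resulting exponent $3$, also agree with the paper's.

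There is, however, a genuine gap in how you treat the non-oscillatory modes. You propose to run the Cramer/trace argument for \emph{every} $k$ and dismiss the regime $k\pi>|s|$ as not delicate; with your specific decomposition $u_k=c_k\sinh(p(1+\cdot))+u_k^p$, estimated piece by piece, this is where the argument fails. For $p=p_k(s)$ real and large your determinant bound $|D_k|\gtrsim |s|k$ is correct, but $\|u_k^p\|$ and $|c_k|\,\|\sinh(p(1+\cdot))\|$ are each of size $e^{p}$ (take $H_{k,s}$ concentrated near $x=-1$: then $u_k^p$ is essentially a multiple of $\sinh(p(1+\cdot))$ and the two pieces cancel), so term-by-term upper bounds cannot yield estimates uniform in $k\gg|s|$; the cancellation of exponentials is invisible to your splitting. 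A similar degeneracy occurs in the band $|s^2-k^2\pi^2|\lesssim 1$, where $\det M_k(s)\to0$ and $c_k$ blows up while the profile $\sinh(p(1+\cdot))$ shrinks, so only products, not factors, are controlled. The paper avoids both problems by splitting differently: for $s^2\le k^2\pi^2+1$ it abandons explicit solutions altogether and uses multiplier identities (inner products of \eqref{eq:res_sys} with $v_k$ and $w_k$, integration by parts, the Poincar\'e inequality), which give bounds uniform in $k$ with no exponentials; the explicit formulas are used, and crudely estimated, only when $p_k(s)$ is purely imaginary with $|p_k(s)|\ge1$, where every wave-side hyperbolic factor has modulus at most $1$. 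To repair your plan you must either do the same, or regroup the explicit solution as the paper's displayed formulas do --- pairing $\sinh(p_k(s)x)$ with $\int_{-1}^x\sinh(p_k(s)(1+r))H_{k,s}(r)\,\dd r$ and so on, so that exponents add to at most $p_k(s)$ and cancel against the $\cosh p_k(s)$ inside $\det M_k(s)$ --- rather than estimate homogeneous and particular parts separately.
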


\begin{proof}
We use the notation introduced in Section~\ref{sec:sg}. Fix $z_0\in Z$ and $s\in \RR$ with $|s|\ge s_0$, where $s_0>0$ is to be chosen in due course. Let us write $z_0=(f,g,h)$ and $R(is,A)z_0=(u,v,w)$, so that $z_0=(is-A)(u,v,w)$. We may decompose each of the entries of $z_0$ and $R(is,A)z_0$ as in the proof of Theorem~\ref{thm:sg} to obtain the system \eqref{eq:res_sys}
with the boundary conditions listed there. Using the same notation as in the proof of Theorem~\ref{thm:sg} but dropping the subscript $L^2$ from now on, we have $\|v_k\|\lesssim|s|\|u_k\|+\|z_k\|$, so our objective is to show that
\begin{equation*}\label{eq:objective}
 k\|u_k\|,|s|\|u_k\|,\|u_k'\|,\|w_k\|\lesssim|s|^3\|z_k\|,\quad k\in\NN,
\end{equation*}
where the implicit constant is independent of $z_0$, $s$ and $k$. Our approach depends crucially on the relationship between the parameters $s$ and $k$, and we distinguish between two main cases.   In what follows we write $p_k(s)$ for $p_k(\lambda)$ when $\lambda=is$, and similarly for $q_k$.\\[-5pt]

\noindent \emph{Case 1:} $s^2\le k^2\pi^2+1$. We begin with some preliminary manoeuvres. Taking the inner product in $L^2(-1,0)$ of the second equation of \eqref{eq:res_sys} with $v_k$ and integrating by parts we obtain
\begin{equation}\label{eq:uv}
\langle u_k',v_k'\rangle-u_k'(0)\overline{v_k(0)}=\langle is\, f_k+g_k-p_k(s)^2 u_k,v_k\rangle.
\end{equation}
Similarly, taking the inner product in $L^2(0,1)$ of the third equation of \eqref{eq:res_sys} with $w_k$ and integrating by parts we obtain 
\begin{equation}\label{eq:ww}
\|w_k'\|^2+w_k'(0)\overline{w_k(0)}=\langle h_k-q_k(s)^2 w_k,w_k\rangle.
\end{equation}
Adding \eqref{eq:uv} and \eqref{eq:ww} we find, after using the boundary conditions, that
\begin{equation}\label{eq:sum}
\langle u_k',v_k'\rangle+\|w_k'\|^2=\langle is\, f_k+g_k-p_k(s)^2 u_k,v_k\rangle+\langle h_k-q_k(s)^2 w_k,w_k\rangle.
\end{equation}
Expanding this equation using the identity $v_k=is\, u_k-f_k$ and then taking real parts we find, after some crude estimates, that
\begin{equation}\label{eq:real}
 \|w_k\|^2\lesssim \|u_k'\|\|z_k\|+\|z_k\|^2.
\end{equation}
Here we have used that $\|u_k\|\le\frac2\pi\|u_k'\|$ by the Poincar\'e inequality and also that $|s|\lesssim k$. Similarly, if we take imaginary parts in \eqref{eq:sum} and make use of~\eqref{eq:real} then after some standard estimates we obtain
\begin{equation}\label{eq:im}
 \|u'_k\|^2+p_k(s)^2\|u_k\|^2\lesssim \|u'_k\|\|z_k\|+\|z_k\|^2.
\end{equation}
Here we have used that $|s|\ge s_0$ for some $s_0>0$. From the Poincar\'e inequality and the fact that $p_k(s)^2\ge-1$ we see that $\|u_k'\|^2\lesssim \|u_k'\|^2+p_k(s)^2\|u_k\|^2$, so by \eqref{eq:im} we may find a constant $C>0$ such that 
$$\|u_k'\|^2- C\|u'_k\|\|z_k\|-C^2\|z_k\|^2\le0.$$
By computing the roots of the polynomial $t\mapsto t^2-C\|z_k\|t-C^2\|z_k\|^2$ it follows easily that $\|u_k'\|\lesssim\|z_k\|$. By \eqref{eq:real} we also have $\|w_k\|\lesssim\|z_k\|$. Next we estimate $k\|u_k\|$. Supposing for the moment that $p_k(s)^2\ge k^2$ we have $k\|u_k\|\lesssim \|z_k\|$ from \eqref{eq:im} and the estimate for $\|u_k'\|$. On the other hand, if $p_k(s)^2<k^2$ then $k\lesssim|s|$ and hence by the Poincar\'e inequality $k\|u_k\|\lesssim |s|\|u_k'\|\lesssim|s|\|z_k\|$. Finally, we have $|s|\|u_k\|\lesssim k\|u_k\|\lesssim|s|\|z_k\|$.\\[-5pt]

\noindent \emph{Case 2:} $s^2>k^2\pi^2+ 1$. Note that subject to the boundary conditions at $x=\pm1$  the solutions $u_k$, $w_k$ of \eqref{eq:res_sys} are given by
\begin{equation}\label{eq:sol}
\begin{aligned}
u_k(x)&=a_k(s)\sinh(p_k(s)(x+1))+U_{k,s}(x),\quad&x\in(-1,0),\\
w_k(x)&=b_k(s)\sinh(q_k(s)(1-x))+W_{k,s}(x), &x\in(0,1),
\end{aligned}
\end{equation}
where $a_k(s),b_k(s)\in\CC$, $k\in\NN$, are constants and 
$$\begin{aligned}
U_{k,s}(x)&=-\frac{1}{p_k(s)}\int_{-1}^x\sinh(p_k(s)(x-r))H_{k,s}(r)\,\dd r,&x\in(-1,0),\\
W_{k,s}(x)&=-\frac{1}{q_k(s)}\int_{x}^1\sinh(q_k(s)(r-x))h_k(r)\,\dd r, &x\in(0,1),
\end{aligned}$$
with $H_{k,s}(x)=is\,f_k(x)+g_k(x)$, $x\in(-1,0)$. If we denote the $2\times 2$ matrix appearing in \eqref{eq:matrix} by $M_k(s)$ when $\lambda=is$, then we may write the coupling conditions at $x=0$ in the form
$$M_k(s)\begin{pmatrix}
a_k(s)\\b_k(s)
\end{pmatrix}=
\begin{pmatrix}
f_k(0)-is\,U_{k,s}(0)+W_{k,s}(0)\\
W_{k,s}'(0)-U_{k,s}'(0)
\end{pmatrix},\quad k\in\NN.$$
Hence the solutions $u_k$ and $w_k$, $k\in\NN$, may be written, after some elementary but tedious manipulations, in the form
\begin{equation*}\label{eq:u_sol}
\begin{aligned}
u_k(x)&=\frac{is\,q_k(s)\cosh q_k(s)}{p_k(s)\det M_k(s)}\bigg(\frac{p_k(s)}{is}\sinh(p_k(s)(x+1))f_k(0)\\&\qquad\qquad-\sinh(p_k(s)x)\int_{-1}^x\sinh(p_k(s)(1+r))H_{k,s}(r)\,\dd r\\&\qquad\qquad-\sinh(p_k(s)(1+x))\int_x^0\sinh(p_k(s)r)H_{k,s}(r)\,\dd r\bigg)\\
&\qquad+\frac{\sinh q_k(s)}{\det M_k(s)}\bigg(\cosh(p_k(s)x)\int_{-1}^x\sinh(p_k(s)(1+r))H_{k,s}(r)\,\dd r\\&\qquad\qquad +\sinh(p_k(s)(1+x))\int_x^0\cosh(p_k(s)r)H_{k,s}(r)\,\dd r\bigg)\\
&\qquad+\frac{1}{\det M_k(s)}\sinh(p_k(s)(x+1))\int_0^1\sinh(q_k(s)(1-r))h_k(r)\,\dd r
\end{aligned}
\end{equation*}
for $x\in(-1,0)$ and 
\begin{equation*}\label{eq:w_sol}
\begin{aligned}
w_k(x)&=\frac{p_k(s)\cosh p_k(s)}{q_k(s)\det M_k(s)}\bigg(-q_k(s)\sinh(q_k(s)(1-x))f_k(0)\\&\qquad\qquad+\sinh(q_k(s)(1-x))\int_{0}^x\sinh(q_k(s)r)h_{k}(r)\,\dd r\\&\qquad\qquad+\sinh(q_k(s)x)\int_x^1\sinh(q_k(s)(1-r))h_{k}(r)\,\dd r\bigg)\\
&\qquad+\frac{is\sinh p_k(s)}{\det M_k(s)}\bigg(\sinh(q_k(s)(1-x))\int_{0}^x\cosh(q_k(s)r)h_{k}(r)\,\dd r\\&\qquad\qquad+\cosh(q_k(s)x)\int_x^1\sinh(q_k(s)(1-r))h_{k}(r)\,\dd r\bigg)\\
&\qquad+\frac{is}{\det M_k(s)}\sinh(q_k(s)(1-x))\int_{-1}^0\sinh(p_k(s)(1+r))H_{k,s}(r)\,\dd r
\end{aligned}
\end{equation*}
for $x\in(0,1)$. Though somewhat laborious to derive, these formulas can be readily verified simply by substituting them into \eqref{eq:res_sys}. Since $s^2>k^2\pi^2+ 1$, we have $k\lesssim|s|$ and hence $1\le |p_k(s)|\lesssim|s|$ and $|s|^{1/2}\lesssim |q_k(s)|\lesssim|s|$. Moreover, $p_k(s)$ is purely imaginary. Hence if we differentiate the expression for $u_k$ and estimate the $L^2$-norms crudely, and in particular insert a factor of $|s|/k$ for later convenience, we obtain
\begin{equation}\label{eq:ests}
\begin{aligned}
k\|u_k\|,|s|\|u_k\|,\|u_k'\|&\lesssim\frac{s^4 e^{\R q_k(s)}}{k|{p_k(s)}||{\det M_k(s)|}}\|z_k\|,\\\mbox{and}\qquad
\|w_k\|&\lesssim\frac{s^2 e^{\R q_k(s)}}{k|{\det M_k(s)}|}\|z_k\|.
\end{aligned}
\end{equation}
Note that $\R q_k(s)>0$. Next we seek to bound the term $e^{-\R q_k(s)}|\det M_k(s)|$ from below. A straightforward calculation yields
\begin{equation}\label{eq:det_est}
\begin{aligned}
2\frac{|{\det M_k(s)}|}{e^{\R q_k(s)}}&\ge \big|sq_k(s)\sin|p_k(s)|-i|p_k(s)|\cos|p_k(s)|\big|\\
&\quad -e^{-2\R q_k(s)}\big|sq_k(s)\sin|p_k(s)|+i|p_k(s)|\cos|p_k(s)|\big|,
\end{aligned}
\end{equation}
and squaring the  modulus  of first term on the right-hand side we obtain
\begin{equation*}\label{eq:mod}
\begin{aligned}
s^2(\R q_k(s))^2\sin^2|p_k(s)| +\big(s\I q_k(s)\sin|p_k(s)|-|p_k(s)|\cos|p_k(s)|\big)^2.
\end{aligned}
\end{equation*}
If the first summand in this expression is less than  $1/8$ then 
$$|p_k(s)|^2\cos^2|p_k(s)|\ge1-\frac{1}{8s^2(\R q_k(s))^2}\ge\frac12$$ 
because $|s|\R q_k(s)\ge 1/2$, and since $|\I q_k(s)|\le \R q_k(s)$ it follows easily that the second summand in the expression must be at least $1/8$. Hence the first term on the right-hand side of \eqref{eq:det_est} is bounded from below by $1/2\sqrt{2}$. The second term on the right-hand side of \eqref{eq:det_est} converges to zero uniformly in $k$ as $|s|\to\infty$, so if we choose the lower bound $s_0\ge1$ for $|s|$ to be sufficiently large we see that $e^{-\R q_k(s)}|\det M_k(s)|$ is uniformly bounded away from zero. Thus \eqref{eq:ests} becomes
$$k\|u_k\|,|s|\|u_k\|,\|u_k'\|\lesssim\frac{s^4 }{k|p_k(s)|}\|z_k\|\quad\mbox{and}\quad \|w_k\|\lesssim\frac{s^2}{k}\|z_k\|.$$
Now if $s^2\le 2k^2\pi^2$ then $|s|\lesssim k$ and hence
\begin{equation*}\label{eq:s3}
k\|u_k\|,|s|\|u_k\|,\|u_k'\|\lesssim|s|^3\|z_k\|\quad\mbox{and}\quad \|w_k\|\lesssim|s|\|z_k\|.
\end{equation*} 
On the other hand, if $s^2>2k^2\pi^2$ then $|s|\lesssim |p_k(s)|$ and therefore
\begin{equation}\label{eq:subopt}
k\|u_k\|,|s|\|u_k\|,\|u_k'\|\lesssim|s|^3\|z_k\|\quad\mbox{and}\quad \|w_k\|\lesssim s^2\|z_k\|.\\[5pt]
\end{equation}
This completes the proof.
\end{proof}

\begin{rem}\label{rem:ub}\begin{enumerate}[(a)]
\item The explicit solutions to \eqref{eq:res_sys} given in the proof work for all $s\in\RR\setminus\{0\}$. Hence in proving Theorem~\ref{thm:sg}, where we only need to show that the inverse of $is-A$ is a bounded operator for each fixed $s\in \RR\setminus\{0\}$, we may use these explicit solutions to obtain crude bounds for all $k\in\NN$ such that $k^2\pi^2+1<s^2$ and then obtain \eqref{eq:list} by using only Case 1 of the above proof, which crucially does not require the lower bound $s_0$ for $|s|$ to be large.
\item In the above proof we have not attempted to obtain the best possible growth estimate in each of the subcases. For instance, a more careful analysis would show that for $s^2>2k^2\pi^2$ one can improve \eqref{eq:subopt} to 
$$k\|u_k\|,|s|\|u_k\|,\|u_k'\|\lesssim s^2\|z_k\|\quad\mbox{and}\quad \|w_k\|\lesssim |s|\|z_k\|.$$
As we shall see now, however, the overall growth rate of $|s|^3$ is sharp. This suggests that the largest contributions come from terms for which $k\asymp|s|$; see Remark~\ref{rem:opt} below.
\end{enumerate}
\end{rem}

\subsection{Optimality of the upper bound}\label{sec:opt}
We now show that the upper bound obtained in Theorem~\ref{thm:ub} is sharp, by proving that $A$ has eigenvalues which approach the imaginary axis at a suitable rate at infinity.

\begin{thm}\label{thm:opt}
There exist sequences $(\lambda_k^\pm)$ of eigenvalues of $A$ and a constant $C>0$  such that $\I \lambda_k^\pm\sim \pm k\pi$ as $k\to\infty$ and
\begin{equation}\label{eq:sequ}
-\frac{C}{|{\I\lambda_k^\pm}|^{3}}\le\R\lambda_k^\pm<0,\quad k\in\NN.
\end{equation}
In particular,  
 $$\limsup_{|s|\to\infty}\frac{\|R(is,A)\|}{|s|^{3}}>0.$$
\end{thm}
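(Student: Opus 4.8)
The plan is to produce eigenvalues of $A$ by finding, for each large $k\in\NN$, a zero $\lambda_k$ of the characteristic function
$$
F_k(\lambda)=\lambda\frac{\tanh p_k(\lambda)}{p_k(\lambda)}+\frac{\tanh q_k(\lambda)}{q_k(\lambda)}
$$
appearing in Theorem~\ref{thm:sg}, lying close to the imaginary axis near the point $ik\pi$. By Theorem~\ref{thm:sg} any such zero in $\CC_-$ with $p_k(\lambda),q_k(\lambda)\neq0$ belongs to $\Sigma_k\subseteq\sigma_p(A)$. The heuristic from Remark~\ref{rem:ub}(b) is that the worst growth of the resolvent occurs when $k\asymp|s|$, i.e.\ when $\lambda\approx ik\pi$, so this is exactly where I expect eigenvalues to hug the axis most closely. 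I would write $\lambda=i(k\pi+\delta)$ and look for a small correction $\delta=\delta_k\in\CC$, aiming to show $\I\lambda_k\sim k\pi$ and, crucially, $\R\lambda_k$ of order $|\I\lambda_k|^{-3}\asymp k^{-3}$.

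The key analytic step is an asymptotic expansion of $F_k$ near $\lambda=ik\pi$. With $\lambda=i(k\pi+\delta)$ one has $p_k(\lambda)^2=k^2\pi^2+\lambda^2=-2k\pi\delta-\delta^2$, so $p_k(\lambda)$ is small, of order $\sqrt{k|\delta|}$, and $\tanh p_k(\lambda)/p_k(\lambda)\to1$ as $p_k(\lambda)\to0$; meanwhile $q_k(\lambda)^2=k^2\pi^2+\lambda=k^2\pi^2+i(k\pi+\delta)$, so $q_k(\lambda)\approx k\pi$ is large and $\tanh q_k(\lambda)\to1$, giving $\tanh q_k(\lambda)/q_k(\lambda)\approx 1/(k\pi)$. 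Substituting these into $F_k(\lambda)=0$ and keeping leading terms yields an approximate scalar equation of the form
$$
i(k\pi+\delta)\Bigl(1+O(k|\delta|)\Bigr)+\frac{1}{k\pi}\Bigl(1+o(1)\Bigr)=0,
$$
which forces the real part of the correction to balance the $1/(k\pi)$ term. The plan is to solve this perturbatively, for instance by a fixed-point or Rouch\'e argument on a small disc around $ik\pi$, to produce a genuine zero $\lambda_k=\lambda_k^+$ of $F_k$ with $\I\lambda_k\sim k\pi$ and $\R\lambda_k\asymp -k^{-3}$; the negative sign of $\R\lambda_k$ must be read off from the sign of the leading term and is consistent with $\sigma(A)\subseteq\CC_-$. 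The sequence $(\lambda_k^-)$ is then obtained by complex conjugation, using that $A$ has real coefficients so that $\sigma_p(A)$ is symmetric about the real axis. This establishes \eqref{eq:sequ}.

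Finally, to pass from the eigenvalue bound to the resolvent lower bound, I would use the standard fact that if $\lambda_k$ is an eigenvalue of $A$ then $\|R(is,A)\|\ge \dist(is,\sigma(A))^{-1}\ge |is-\lambda_k|^{-1}$ for $s=\I\lambda_k$. Since $|is-\lambda_k|=|\R\lambda_k|\lesssim |\I\lambda_k|^{-3}=|s|^{-3}$, this gives $\|R(is,A)\|\gtrsim|s|^{3}$ along the sequence $s=\I\lambda_k^\pm\to\pm\infty$, whence $\limsup_{|s|\to\infty}\|R(is,A)\|/|s|^3>0$, as required.

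I expect the main obstacle to be the careful perturbative solution of $F_k(\lambda)=0$: one must track the expansions of $\tanh p_k(\lambda)/p_k(\lambda)$ and $\tanh q_k(\lambda)/q_k(\lambda)$ to sufficient order, uniformly in $k$, to guarantee both that a zero genuinely exists in the prescribed small disc (rather than merely an approximate one) and that its real part is negative and of the sharp order $k^{-3}$ rather than something smaller. Getting the constant $C$ and the strict inequality $\R\lambda_k<0$ to come out of the leading-order balance, while controlling the error terms, is the delicate part.
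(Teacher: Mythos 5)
Your overall strategy --- locate zeros of the characteristic function from Theorem~\ref{thm:sg} near the imaginary axis and conclude via $\|R(is,A)\|\ge\dist(is,\sigma(A))^{-1}$ --- is the same as the paper's, but your key analytic step fails because you anchor the perturbation at the wrong point. With $\lambda=i(k\pi+\delta)$ and $k|\delta|\ll1$, which is precisely the regime in which your approximation $\tanh p_k(\lambda)/p_k(\lambda)=1+O(k|\delta|)$ is valid, one has
\begin{equation*}
\Big|\lambda\frac{\tanh p_k(\lambda)}{p_k(\lambda)}+\frac{\tanh q_k(\lambda)}{q_k(\lambda)}\Big|\ \ge\ k\pi\big(1-O(k|\delta|)\big)-O(k^{-1})\ \gtrsim\ k,
\end{equation*}
so the characteristic function is uniformly bounded \emph{away} from zero there: the wave term of modulus $\asymp k$ cannot be cancelled by the heat term of modulus $\asymp k^{-1}$, and the ``balance'' you invoke (real part of $\delta$ against $1/(k\pi)$) equates quantities differing in size by a factor of $k^{2}$. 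No fixed-point or Rouch\'e argument can produce a zero in a region where the function has a lower bound of order $k$; in particular $A$ has no eigenvalues near $ik\pi$ with $p_k(\lambda)$ small, so the sequence you aim to construct does not exist where you look for it.

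The eigenvalues that do hug the axis lie where the \emph{wave} factor $\tanh p_k(\lambda)$ nearly vanishes with $p_k(\lambda)$ bounded away from $0$, i.e.\ $p_k(\lambda)\approx in\pi$ with $n\in\NN$, equivalently $\lambda\approx\pm i\pi\sqrt{k^2+n^2}$. There $\lambda\tanh p_k(\lambda)/p_k(\lambda)$ has modulus of order $k\,|p_k(\lambda)-in\pi|$, and balancing this against $\tanh q_k(\lambda)/q_k(\lambda)\approx1/(k\pi)$ forces a displacement of the root from the imaginary axis of order $k^{-3}$ --- this is where the exponent $3$ actually comes from. The paper implements exactly this: multiplying the characteristic equation by $p_k(\lambda)/\lambda$ it writes it as $F_k+G_k=0$ with $F_k(\lambda)=\tanh p_k(\lambda)$ and $G_k(\lambda)=\frac{p_k(\lambda)}{\lambda q_k(\lambda)}\tanh q_k(\lambda)$, anchors at $\mu_k^\pm=\pm i\pi\sqrt{k^2+1}$ (so that $F_k(\mu_k^\pm)=0$), and shows on the circles $|\lambda-\mu_k^\pm|=k^{-3}$ that $|F_k(\lambda)|\ge(2k^2)^{-1}$ while $|G_k(\lambda)|\sim(k^2\pi)^{-1}<|F_k(\lambda)|$, so Rouch\'e's theorem gives roots $\lambda_k^\pm$ with $|\lambda_k^\pm-\mu_k^\pm|<k^{-3}$; the strict inequality $\R\lambda_k^\pm<0$ is then immediate from $\sigma(A)\subseteq\CC_-$ in Theorem~\ref{thm:sg} rather than from any expansion, and \eqref{eq:sequ} follows because $\mu_k^\pm$ is purely imaginary. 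Your final step (the distance-to-spectrum bound) is correct and identical to the paper's, but the construction of the eigenvalue sequence --- the heart of the proof --- needs to be re-anchored as above before it can go through.
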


\begin{proof}
Consider the functions $F_k(\lambda)=\tanh p_k(\lambda)$ and 
$$G_k(\lambda)=\frac{p_k(\lambda)}{\lambda q_k(\lambda)}\tanh q_k(\lambda),\quad \lambda\in\CC,$$
where $p_k$, $q_k$ are as in Theorem~\ref{thm:sg}, and let $\smash{\mu_{k}^\pm}=\pm i\pi\sqrt{k^2+1}$, $k\in\NN$. Note that $p_k(\smash{\mu_{k}^\pm})=i\pi$ and let $q_k^\pm=q_k(\smash{\mu_{k}^\pm})$, $k\in\NN$. Then $F_k(\smash{\mu_{k}^\pm})=0$ and 
$$G_k({\mu_{k}^\pm})=\pm\frac{\tanh q_k^\pm}{q_k^\pm\sqrt{k^2+1}},\quad k\in\NN.$$
In particular, since $|q_k^\pm|\sim k\pi$  and $|{\tanh q_k^\pm}|\to1$ as $k\to\infty$  we have $|G_k(\smash{\mu_{k}^\pm})|\sim(k^2\pi)^{-1}$ as $k\to\infty$. In fact, if we let  $\Omega_k^\pm=\{\lambda\in\CC:|\lambda-\smash{\mu_{k}^\pm}|<k^{-3}\}$,  $k\in\NN$, a more careful argument using Taylor expansions shows that  
\begin{equation}\label{eq:G_est}
\sup\big\{|G_k(\lambda)|:\lambda\in\partial\Omega_k^\pm\big\}\sim\frac{1}{k^2\pi},\quad k\to\infty.
\end{equation}
For $\lambda=\smash{\mu_{k}^\pm}+z$  we have $$F_k(\lambda)=i\tan\left(\pi\left(1-\frac{z^2\pm2zi\pi\sqrt{k^2+1}}{\pi^2}\right)^{1/2}\right)$$
and hence another Taylor expansion shows that $|F_k(\lambda)|\ge (2k^2)^{-1}$ for $\lambda\in\partial\Omega_k^\pm$
provided $k$ is sufficiently large. 
Thus by \eqref{eq:G_est} we have $|G_k(\lambda)|<|F_k(\lambda)|$ for such values of $\lambda$ and $k$, so  Rouch\'e's theorem implies that the function $F_k+G_k$ has roots $\lambda_k^\pm\in\Omega_k^\pm$ when $k$ is sufficiently large.  By Theorem~\ref{thm:sg}  any root of $F_k+G_k$ is an eigenvalue of $A$, so we obtain \eqref{eq:sequ}. The final claim follows easily since  $\|R(is,A)\|\ge\dist(is,\sigma(A))^{-1}$, $s\in\RR$.
\end{proof}

\begin{rem}\label{rem:opt}
An alternative approach to proving optimality of the resolvent bound in Theorem~\ref{thm:ub} is to show directly, by estimating $\|R(is,A)z_0\|$ from below for suitable $s\in\RR$ and $z_0\in Z$, that the upper bounds in the latter part of the proof of that result cannot be improved. Note also that the points on the imaginary axis at which the resolvent norm is shown to be large are of the form $\pm is_k$, $k\in\NN$, where $s_k\sim k\pi $ as $k\to\infty$; see Remark~\ref{rem:ub}.
\end{rem}

\section{Energy decay}\label{sec:energy}

We now turn to the rate of energy decay for classical solutions of the abstract Cauchy problem \eqref{eq:ACP} corresponding to the wave-heat system~\eqref{eq:PDE}. Our main result, Theorem~\ref{thm:energy} below, is a consequence of the following abstract result on rates of decay for semigroups on Hilbert space due to Borichev and Tomilov \cite{BT10}; see also \cite{BCT16, RSS17}.

\begin{thm}\label{thm:BT}
Let $Z$ be a Hilbert space and let $\T$ be a bounded $C_0$-semigroup on $Z$ with generator $A$. Suppose that $\sigma(A)\subseteq\CC_-$. Then for any constant $\alpha>0$ the following conditions are equivalent:
\begin{itemize}
    \setlength{\itemsep}{.8ex}
  \item[\textup{(i)}] $\|R(is,A)\|=O(|s|^\alpha)$ as $|s|\to\infty$;
  \item[\textup{(ii)}] $\|T(t)A^{-1}\|=O(t^{-1/\alpha})$ as $t\to\infty$;
  \item[\textup{(iii)}] $\|T(t)z_0\|=o(t^{-1/\alpha})$ as $t\to\infty$ for all $z_0\in D(A)$.
\end{itemize}
\end{thm}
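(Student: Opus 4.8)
The plan is to establish the full equivalence by proving the substantial implications (i)$\Rightarrow$(ii) and (ii)$\Rightarrow$(i) and then deducing (iii) from (ii) and conversely by softer arguments. Before anything else I would record that the standing hypotheses force $0\in\rho(A)$: boundedness of $\T$ gives $\|R(\lambda,A)\|\le M/\R\lambda$ for $\R\lambda>0$, while $\sigma(A)\subseteq\CC_-$ places a neighbourhood of the imaginary axis in the resolvent set, so $A^{-1}$ is bounded and statements (ii) and (iii) are meaningful. The implication (ii)$\Rightarrow$(i) I would treat as the easy half, using the variation-of-parameters identity
$$R(is,A)x=e^{-is\tau}T(\tau)R(is,A)x+\int_0^\tau e^{-ist}T(t)x\,\dd t,\qquad s\in\RR,\ \tau>0,$$
obtained by integrating $\frac{\dd}{\dd t}\bigl(e^{-ist}T(t)R(is,A)x\bigr)=-e^{-ist}T(t)x$ from $0$ to $\tau$. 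Writing $T(\tau)R(is,A)=T(\tau)A^{-1}(isR(is,A)-I)$ and inserting the bound from (ii) gives $\|T(\tau)R(is,A)\|\lesssim\tau^{-1/\alpha}(|s|\,\|R(is,A)\|+1)$, while crudely $\bigl\|\int_0^\tau e^{-ist}T(t)x\,\dd t\bigr\|\lesssim\tau\|x\|$; choosing $\tau\asymp|s|^\alpha$ so that the first term absorbs half of $\|R(is,A)\|$ then yields $\|R(is,A)\|\lesssim|s|^\alpha$, which is (i).

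The core of the theorem is (i)$\Rightarrow$(ii), and here I would rely decisively on the Hilbert space structure. First, a Neumann-series expansion $R(\lambda,A)=R(is,A)\sum_{n\ge0}\bigl((is-\lambda)R(is,A)\bigr)^n$ extends the bound in (i) from the imaginary axis to a region $\{\lambda:|\R\lambda|\lesssim(1+|\I\lambda|)^{-\alpha}\}$, keeping $\|R(\lambda,A)\|\lesssim(1+|\I\lambda|)^\alpha$ there. Next, for fixed $x\in Z$ the function $t\mapsto T(t)A^{-k}x$ has Laplace transform $R(\lambda,A)A^{-k}x$, so Plancherel's theorem gives
$$\int_0^\infty e^{-2\varepsilon t}\|T(t)A^{-k}x\|^2\,\dd t=\frac1{2\pi}\int_\RR\|R(\varepsilon+is,A)A^{-k}x\|^2\,\dd s,\qquad \varepsilon>0.$$
Since $\|R(is,A)A^{-k}\|\lesssim(1+|s|)^{\alpha-k}$ (by iterating $A^{-1}R(\lambda,A)=\lambda^{-1}(R(\lambda,A)+A^{-1})$), choosing $k>\alpha+\tfrac12$ and letting $\varepsilon\downarrow0$ shows $t\mapsto T(t)A^{-k}x$ lies in $L^2(0,\infty;Z)$ with norm $\lesssim\|x\|$; differentiating the transform in $s$ (which replaces $R$ by its powers) furnishes weighted bounds $\int_0^\infty t^{2N}\|T(t)A^{-k}x\|^2\,\dd t\lesssim\|x\|^2$ once $k$ exceeds $(N+1)\alpha+\tfrac12$.

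To convert these integrated bounds into genuine pointwise decay I would exploit that $g(t)=\|T(t)y\|$ is quasi-decreasing, $g(t)\le Mg(\tau)$ for $t\ge\tau$, so that $g(t)^2\lesssim t^{-1}\int_{t/2}^\infty g^2$; feeding in the weighted estimate yields $\|T(t)A^{-k}x\|\lesssim t^{-(2N+1)/2}\|x\|$ for suitable $k,N$. It then remains to transfer this rate from the high power $A^{-k}$ down to $A^{-1}$ with the correct exponent. I expect this to be the crux of the whole argument: on the one hand each appeal to quasi-monotonicity loses half a power of $t$, so the scheme as described only reaches an exponent strictly below the optimal $1/\alpha$; on the other hand an honest interpolation between $I$ and $A^{-k}$ (a moment inequality for the powers of $A^{-1}$) is needed, which is delicate for a general, non-normal generator. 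Closing this gap to obtain precisely $t^{-1/\alpha}$ is the substance of the Borichev--Tomilov theorem, and it is here that the Hilbert space hypothesis is indispensable, the corresponding Banach-space statement being known to fail.

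Finally I would dispatch the equivalence of (ii) and (iii). For (ii)$\Rightarrow$(iii), the semigroup law together with the commutativity of $A^{-1}$ with $\T$ gives $T(t)A^{-2}=(T(t/2)A^{-1})^2$, hence $\|T(t)A^{-2}\|=O(t^{-2/\alpha})$; for $z_0\in D(A)$ and $\varepsilon>0$ I would pick $y\in D(A)$ with $\|Az_0-y\|<\varepsilon$ and split $T(t)z_0=T(t)A^{-1}(Az_0-y)+T(t)A^{-2}(Ay)$, the first term being $\lesssim\varepsilon\,t^{-1/\alpha}$ and the second $O(t^{-2/\alpha})=o(t^{-1/\alpha})$, which upgrades the $O$-bound to the required little-$o$. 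For the converse (iii)$\Rightarrow$(ii), note that $T(t)A^{-1}x=T(t)z_0$ with $z_0=A^{-1}x\in D(A)$, so $\sup_{t\ge1}t^{1/\alpha}\|T(t)A^{-1}x\|<\infty$ for each $x$; the uniform boundedness principle then promotes this to $\sup_{t\ge1}t^{1/\alpha}\|T(t)A^{-1}\|<\infty$, which is (ii). Combined with (i)$\Leftrightarrow$(ii) this gives the full equivalence.
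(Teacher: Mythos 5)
The first thing to note is that the paper does not prove Theorem~\ref{thm:BT} at all: it is quoted as a known result of Borichev and Tomilov \cite{BT10} (with pointers to \cite{BCT16,RSS17}) and used as a black box to pass from Theorem~\ref{thm:ub} to Theorem~\ref{thm:energy}. So your attempt must be measured against the proof in the literature. Your peripheral implications are correct and standard: the identity $R(is,A)x=e^{-is\tau}T(\tau)R(is,A)x+\int_0^\tau e^{-ist}T(t)x\,\dd t$ combined with $T(\tau)R(is,A)=T(\tau)A^{-1}(isR(is,A)-I)$ and the choice $\tau\asymp|s|^\alpha$ gives (ii)$\Rightarrow$(i); the splitting $T(t)z_0=T(t)A^{-1}(Az_0-y)+T(t)A^{-2}(Ay)$ with $\|Az_0-y\|<\varepsilon$ and $\|T(t)A^{-2}\|=\|(T(t/2)A^{-1})^2\|=O(t^{-2/\alpha})$ gives (ii)$\Rightarrow$(iii); and the uniform boundedness principle gives (iii)$\Rightarrow$(ii). (One small inaccuracy: $\|R(is,A)A^{-k}\|$ is not $O((1+|s|)^{\alpha-k})$ for large $k$; iterating the identity saturates at $O((1+|s|)^{-1})$. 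Since $(1+|s|)^{-1}$ is square-integrable this does not damage your $L^2$ estimates, but the exponent bookkeeping should be corrected.)

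The genuine gap is exactly where you locate it: (i)$\Rightarrow$(ii) is not proven, and this implication is the entire substance of the theorem. Your scheme --- Plancherel to obtain $\int_0^\infty t^{2N}\|T(t)A^{-k}x\|^2\,\dd t\lesssim\|x\|^2$ for $k$ slightly larger than $(N+1)\alpha+\tfrac12$, quasi-monotonicity of $t\mapsto\|T(t)A^{-k}x\|$ to convert this into $\|T(t)A^{-k}x\|\lesssim t^{-N-1/2}\|x\|$, then a moment inequality to descend from $A^{-k}$ to $A^{-1}$ --- yields at best $\|T(t)A^{-1}\|\lesssim t^{-(N+1/2)/k}$, and $(N+\tfrac12)/((N+1)\alpha+\tfrac12)<1/\alpha$ for every fixed $N$. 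Letting $N\to\infty$ only gives $\|T(t)A^{-1}\|=O(t^{-1/\alpha+\varepsilon})$ for every $\varepsilon>0$, which was the pre-2010 state of the art and holds (with logarithmic corrections) even on Banach spaces; removing the $\varepsilon$ is precisely what Borichev and Tomilov contributed, and it cannot be extracted from your scheme, since the half-power loss in the quasi-monotonicity step and the interpolation loss are both intrinsic to it. The actual proof of the endpoint is a genuinely different device: roughly, one represents $\langle T(t)A^{-n}x,y\rangle$ as a boundary Fourier integral of the resolvent, uses the resolvent identity to isolate the singular powers of $\lambda^{-1}$, and controls the remaining term by Cauchy--Schwarz against \emph{two} $L^2(\dd s)$ bounds furnished by Plancherel --- one for the orbit of $T(t)$ and one for the orbit of the adjoint semigroup $T(t)^*$ --- so that the loss is never incurred. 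This double application of Plancherel's theorem (and hence of the Hilbert-space hypothesis, whose necessity you correctly note) is the missing ingredient; without it your proposal proves a strictly weaker statement than Theorem~\ref{thm:BT}, and in particular one that would not suffice to deduce the sharp rate $t^{-2/3}$ in Theorem~\ref{thm:energy}.
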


We now come to the main result of this paper.

\begin{thm}\label{thm:energy}
For $z_0\in D(A)$ the energy of the classical solution of \eqref{eq:ACP} satisfies $E_{z_0}(t)=o(t^{-2/3})$ as $t\to\infty$.
\end{thm}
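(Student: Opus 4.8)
The plan is to recognise that the energy functional is, up to a constant factor, the squared norm of the semigroup orbit in $Z$, and then simply to feed the resolvent estimate of Theorem~\ref{thm:ub} into the Borichev--Tomilov theorem. The substantive work has already been done in Sections~\ref{sec:sg} and~\ref{sec:res}; what remains is a clean assembly, so I do not anticipate a genuine obstacle here beyond bookkeeping.

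First I would observe that, by the very definition of the space $Z=H_\Gamma^1(\Omega_-)\times L^2(\Omega_-)\times L^2(\Omega_+)$ equipped with the Poincar\'e norm $\|u\|=\|\nabla u\|_{L^2}$ on the first factor, the orbit $z(t)=T(t)z_0=(u(\cdot,t),u_t(\cdot,t),w(\cdot,t))$ satisfies
$$\|T(t)z_0\|_Z^2=\|\nabla u(\cdot,t)\|_{L^2(\Omega_-)}^2+\|u_t(\cdot,t)\|_{L^2(\Omega_-)}^2+\|w(\cdot,t)\|_{L^2(\Omega_+)}^2=2E_{z_0}(t),$$
the last equality holding because $u$ and $w$, extended by zero, are supported on $\Omega_-$ and $\Omega_+$ respectively, so that the integral over $\Omega$ in the definition of $E_{z_0}(t)$ splits into exactly these three terms. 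Thus the decay of the energy is equivalent to the decay of $\|T(t)z_0\|_Z$, and it suffices to bound the latter.

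Next I would apply Theorem~\ref{thm:BT} with $\alpha=3$. Theorem~\ref{thm:sg} guarantees that $\T$ is a bounded (indeed contraction) $C_0$-semigroup with generator $A$ satisfying $\sigma(A)\subseteq\CC_-$, so the standing hypotheses of the Borichev--Tomilov theorem are met, while Theorem~\ref{thm:ub} supplies precisely condition~(i), namely $\|R(is,A)\|=O(|s|^3)$ as $|s|\to\infty$. The implication (i)$\Rightarrow$(iii) then yields $\|T(t)z_0\|=o(t^{-1/3})$ as $t\to\infty$ for every $z_0\in D(A)$. Squaring this estimate and using the identity above gives $E_{z_0}(t)=\tfrac12\|T(t)z_0\|_Z^2=o(t^{-2/3})$ as $t\to\infty$, as required. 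The only points requiring any care are the identification of the energy with half the squared orbit norm, which is immediate from the construction of $Z$ in Section~\ref{sec:sg}, and the routine verification that the hypotheses of Theorem~\ref{thm:BT} are genuinely in force; neither presents a real difficulty.
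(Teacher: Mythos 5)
Your proposal is correct and follows exactly the paper's own argument: identify $E_{z_0}(t)=\tfrac12\|T(t)z_0\|_Z^2$, verify the hypotheses of Theorem~\ref{thm:BT} using Theorem~\ref{thm:sg}, feed in the resolvent bound $\|R(is,A)\|=O(|s|^3)$ from Theorem~\ref{thm:ub}, and apply the implication (i)$\Rightarrow$(iii) with $\alpha=3$. The paper's proof is simply a more compressed version of the same assembly, so there is nothing to add.
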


\begin{proof}
Note that for any $z_0\in Z$ we have $E_{z_0}(t)=\frac12\|T(t)z_0\|^2$, $t\ge0$. Since $\|R(is,A)\|=O(|s|^3)$ as $|s|\to\infty$ by Theorem~\ref{thm:ub} it follows from Theorem~\ref{thm:BT} that $E_{z_0}(t)=o(t^{-2/3})$ as $t\to\infty$ for $z_0\in D(A)$.
\end{proof}

\begin{rem}

\begin{enumerate}[(a)]
\item The rate $t^{-2/3}$ in Theorem~\ref{thm:energy} is optimal in the sense that, given any positive function $r$ satisfying $r(t)=o(t^{-2/3})$ as $t\to\infty$, there exists $z_0\in D(A)$ such that $E_{z_0}(t)\ne o(r(t))$ as $t\to\infty$. This follows from Theorem~\ref{thm:opt} and  the uniform boundedness principle together with  \cite[Proposition~1.3]{BD08}; see also \cite[Theorem~4.4.14]{ABHN11}.
\item 
It follows from Theorem~\ref{thm:opt} and  standard $C_0$-semigroup theory that there is no hope of finding a rate of energy decay which is valid for \emph{all} initial values $z_0\in Z$; see also \cite[Lemma~3.1.7]{vN96}.
On the other hand, if $z_0\in D(A^k)$ for some $k\in\NN$ then it follows easily from the semigroup property that $E_{z_0}(t)=o(t^{-2k/3})$ as $t\to\infty$, 
so more regular solutions have faster energy decay.  
\end{enumerate}
\end{rem}

We conclude by mentioning that  methods similar to the ones presented here can be used to obtain sharp estimates for the rate of energy decay in various related problems, such as system \eqref{eq:PDE} but with the coupling condition $u_t(0,y,t)=w(0,y,t)$ replaced by $u(0,y,t)=w(0,y,t)$ for $y\in(0,1)$, $t>0$. Another example is the following wave equation which is damped on one half of its  rectangular domain but not on the other:
\begin{equation*}\label{eq:DWE}
\left\{\begin{aligned}
u_{tt}(x,y,t)&+\mathbbm{1}_{\Omega_+}(x,y)u_t(x,y,t)=\Delta u(x,y,t), &(x,y)\in \Omega,\;t>0,\\[3pt]
u(x,y,t)&=0, &(x,y)\in\partial\Omega,\; t>0,\\[3pt]
u(x,y,0)&=u_0(x,y),\;u_t(x,y,0)=v_0(x,y)&(x,y)\in\Omega.\\
\end{aligned}\right.
\end{equation*} 
Here $\Omega=(-1,1)\times(0,1)$ and $\Omega_+=(0,1)\times(0,1)$ as in Section~\ref{sec:intro}, and  $u_0,v_0$ are suitable functions defined on $\Omega$. In this case the energy 
$$E_{z_0}(t)=\frac12\int_\Omega\Big(|\nabla u(x,y,t)|^2+|u_t(x,y,t)|^2\Big)\,\dd(x,y),\quad t\ge0,$$ 
of any classical solution $u$, with  corresponding initial data $z_0=(u_0,v_0)$, can be shown to satisfy $\smash{E_{z_0}(t)=o(t^{-4/3})}$ as $t\to\infty$, and furthermore this estimate is sharp; see \cite[Part~IV.B]{AnLe14}, \cite{Sta17} and also \cite{BurHit07,	LiuRao05}. Our methods can also be adapted to study the following  wave equation on the square $\Omega_-=(-1,0)\times(0,1)$ subject to Dirichlet boundary conditions along three of its edges but with the coupled heat equation in~\eqref{eq:PDE} replaced by a dissipative boundary condition along the fourth edge:
\begin{equation}\label{eq:WE}
\left\{\begin{aligned}
u_{tt}(x,y,t)&=\Delta u(x,y,t), \hspace{40pt}&(x,y)\in \Omega_-,\;t>0,\\[3pt]
u(x,0,t)&=u(x,1,t)=0, &x\in(-1,0),\; t>0,\\
u(-1,y,t)&=0, &y\in(0,1),\;t>0,\\
u_x(0,y,t)&=-\kappa u_t(0,y,t),& y\in(0,1),\;t>0,\\[3pt]
u(x,y,0)&=u_0(x,y),&(x,y)\in\Omega_-,\\
u_t(x,y,0)&=v_0(x,y),&(x,y)\in\Omega_-,
\end{aligned}\right.
\end{equation}
for suitable initial data $u_0,v_0$ defined on $\Omega_-$ and any constant $\kappa>0$. Models of this type are considered for instance in \cite{AbNi15a, AbNi15b, Lag83}. By formulating \eqref{eq:WE} as an abstract Cauchy problem and proceeding as in Sections~\ref{sec:sg} and \ref{sec:res} one obtains the sharp estimate $\|R(is,A)\|=O(|s|^2)$, $|s|\to\infty$, for the resolvent of the generator $A$ of the corresponding contraction semigroup. It follows as in Theorem~\ref{thm:energy} that the energy of any classical solution $u$ of \eqref{eq:WE} decays like $o(t^{-1})$ as $t\to\infty$, and this estimate too is optimal. 

\bibliographystyle{plain}

\end{document}